%% file: coexact.tex
\documentclass[11pt]{amsart}
\usepackage{amsmath,amssymb, amsthm, mathabx,amscd, amsfonts, amssymb, hyperref, mathrsfs, textcomp, epsfig, graphicx, IEEEtrantools, tikz, verbatim, xypic, subcaption, a4wide, arydshln}
\usepackage[mathscr]{euscript}

\usetikzlibrary{shapes,arrows,cd}
\usetikzlibrary{matrix,decorations.pathreplacing,angles,quotes}
\usetikzlibrary{decorations.markings}

 \tikzset{->-/.style={decoration={
  markings,
  mark=at position .5 with {\arrow{>}}},postaction={decorate}}}

\newtheorem{thm}{Theorem}
\newtheorem{prop}{Proposition}[section]
\newtheorem{lemma}{Lemma}[section]

\newtheorem*{quest*}{Main Question}

\newtheorem*{conj*}{Conjecture}

\theoremstyle{definition}
\newtheorem{defn}{Definition}[section]

\theoremstyle{remark}
\newtheorem{remark}{Remark}[section]

     \RequirePackage{rotating}                   % Case (2)
    \def\HSt{%
       \setbox0=\hbox{$\widehat{\mathit{HS}}$}
       \setbox1=\hbox{$\mathit{HS}$}
       \dimen0=1.1\ht0
       \advance\dimen0 by 1.17\ht1
       \smash{\mskip2mu\raise\dimen0\rlap{%
          \begin{turn}{180}
              {$\widehat{\phantom{\mathit{HS}}}$}
           \end{turn}} \mskip-2mu    
                \mathit{HS}
    }{\vphantom{\widehat{\mathit{HS}}}}{}}

     \RequirePackage{rotating}                   % Case (2)
    \def\HMt{%
       \setbox0=\hbox{$\widehat{\mathit{HM}}$}
       \setbox1=\hbox{$\mathit{HM}$}
       \dimen0=1.1\ht0
       \advance\dimen0 by 1.17\ht1
       \smash{\mskip2mu\raise\dimen0\rlap{%
          \begin{turn}{180}
              {$\widehat{\phantom{\mathit{HM}}}$}
           \end{turn}} \mskip-2mu    
                \mathit{HM}
    }{\vphantom{\widehat{\mathit{HM}}}}{}}

\newcommand{\vol}{\mathrm{vol}}

\newcommand{\bigslant}[2]{{\raisebox{.2em}{$#1$}\left/\raisebox{-.2em}{$#2$}\right.}}

\begin{document}

\title{Coexact $1$-form spectral gaps of hyperbolic rational homology spheres}

\author{Francesco Lin}
\address{Department of Mathematics, Columbia University} 
\email{flin@math.columbia.edu}

\author{Michael Lipnowski}
\address{Department of Mathematics, Ohio State University} 
\email{michaellipnowski@gmail.com}

\begin{abstract} We discuss a construction of families of hyperbolic rational homology spheres with coexact $1$-form spectral gap uniformly bounded below which is well-suited for explicit computations. Using this, we provide several disjoint intervals containing a limit point of such  spectral gaps, the rightmost of which is $[0.8196,0.8277]$. Furthermore, we also exhibit a family of arithmetic examples, answering a question of Abdurrahman-Adve-Giri-Lowe-Zung.
\end{abstract}

\maketitle

\section*{Introduction}

The coexact $1$-form spectrum of hyperbolic three-manifolds has recently sparked much interest.  It relates closely to a conjecture of Bergeron and Venkatesh growth of torsion homology in congruence covers \cite{BV}. In this connection, a key role is played by the first eigenvalue $\lambda_1^*$, the \emph{coexact $1$-form spectral gap}. This gap behaves qualitatively very differently than the more classically studied spectral gap on functions $\lambda_1$; a key difference is that the bottom of their $L^2$-spectra on hyperbolic space $\mathbb{H}^3$ are $\lambda_1^*(\mathbb{H}^3)=0$ and $\lambda_1(\mathbb{H}^3)=1$ respectively. Geometrically, $\lambda_1$ can be understood in terms of the Cheeger constant \cite{Cha}, while $\lambda_1^*$ can be bounded below in terms of suitable stable isoperimetric ratios \cite{LS,Rud}. In \cite{AAGLZ}, the authors initiate a systematic study of the set of spectral gaps of hyperbolic rational homology spheres, i.e. closed hyperbolic three-manifolds with $b_1(Y)=0$, in the spirit of the theory of bass notes for spectra of locally uniform geometries \cite{Sar}. In particular, they construct infinite families with a uniform lower bound on $\lambda_1^*$ and study in detail their attendant geometric and topological properties.
\\
\par
In this paper, we introduce an alternate construction of infinite families of hyperbolic rational homology spheres with $\lambda_1^*$ uniformly bounded below. Roughly speaking, our construction is based on a fixed mapping torus $M_\varphi$ (of a diffeomorphism $\varphi$ of a surface of genus at least two with specific properties) admitting an involution of a special type endowing the cyclic covers of $M_\varphi$ with dihedral symmetry. The manifolds we consider are obtained from these cyclic covers by taking the quotient by a fixed-point-free involution.
\par
This construction lends itself well to \textit{explicit} computations in hyperbolic geometry and to direct searches for examples. In fact, our construction yields families for which the sequence of spectral gaps $\lambda_1^*$ converges, and for which we can provide an explicit arbitrarily small interval containing the limit. The example for which the limit is the largest among those we found leads to the following.
\begin{thm}\label{thm1}
There exists an infinite family $\{Y_n\}$ of hyperbolic rational homology spheres such that $\lim_{n\rightarrow\infty}\lambda_1^*(Y_n)\in [0.8196,0.8277]$.
\end{thm}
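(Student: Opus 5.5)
We construct $Y_n$ as free involution quotients of the cyclic covers of a fixed mapping torus, and identify $\lim\lambda_1^*(Y_n)$ with a spectral quantity of the infinite cyclic cover that can be bounded numerically.

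\textbf{Setup.} Fix a closed surface $\Sigma$ of genus at least two and a pseudo-Anosov $\varphi$, so $M_\varphi$ is hyperbolic. Assume $M_\varphi$ admits an orientation-preserving involution $\iota$ that reverses the circle direction. Then $\iota$ lifts to the $n$-fold cyclic covers $M_n$, and $M_n$ carries a dihedral action of $D_n$ generated by the deck translation $\tau$ and a lift of $\iota$. Choose $\varphi,\iota$ so that, for the relevant parity of $n$, some reflection $\iota\tau^k$ acts freely. Set $Y_n=M_n/\langle\iota\tau^k\rangle$. Then $H^1(Y_n;\mathbb{Q})$ is the $(\iota\tau^k)$-invariant part of $H^1(M_n;\mathbb{Q})$. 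Decompose the latter under $\tau$ into $H^1(\Sigma)$-eigenspaces of $\varphi^*$ for $n$-th roots of unity, plus the class dual to the fiber. The fiber class is anti-invariant because $\iota$ reverses the circle. So if $\varphi^*$ has no root-of-unity eigenvalues on $H^1(\Sigma;\mathbb{Q})$ (or at least none whose eigenspaces survive the involution), then $b_1(Y_n)=0$ for all $n$.

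\textbf{Spectral gap.} Coexact $1$-forms on $Y_n$ are the $(\iota\tau^k)$-invariant coexact $1$-forms on $M_n$. Each eigenform on $M_n$ decomposes into $\tau$-isotypic pieces. A piece of character $e^{2\pi i j/n}$ is a twisted coexact $1$-form on $M_\varphi$ with coefficients in the flat line bundle of holonomy $\theta=2\pi j/n$; the reflection pairs $\theta$ with $-\theta$. Hence
\[
\lambda_1^*(Y_n)=\min_{j}\mu(2\pi j/n),
\]
where $\mu(\theta)$ is the bottom of the twisted coexact spectrum (restricted to the symmetric combination when $\theta\equiv-\theta$). The function $\mu$ is continuous in $\theta$ for $\theta\neq 0$. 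Near $\theta=0$, the untwisted harmonic form $d t$ is anti-invariant and is killed by the quotient, while twisted harmonic forms vanish by the cohomology assumption. It follows that $\lambda_1^*(Y_n)\to \inf_{\theta\in(0,2\pi)}\mu(\theta)$, with the endpoint behaviour handled by a perturbation argument on the $\theta=0$ symmetric part. Equivalently, the limit is the bottom of the coexact spectrum of the infinite cyclic cover $\widetilde M_\varphi$ on forms that are symmetric for the lifted involution. This gives a uniform lower bound.

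\textbf{Numerics.} To locate the limit in $[0.8196,0.8277]$, choose an explicit $\varphi$, for instance from the census of fibered manifolds with a suitable symmetry, and compute a hyperbolic structure on $M_\varphi$ in interval arithmetic. The upper bound is the easier direction: exhibit a test twisted $1$-form for a good $\theta$, or use a Selberg trace formula computation on $Y_n$ for large $n$ with a rigorous length spectrum. For the lower bound, apply the twisted trace formula to $M_\varphi$ with the family of characters, certified by computed closed geodesics up to a cutoff. This is uniform in $\theta$ because the geodesic data of $M_\varphi$ is fixed and only the character weights vary. A positivity argument with a chosen test function then shows there is no twisted eigenvalue below $0.8196$ for any $\theta$.

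\textbf{Main obstacle.} Making the lower bound rigorous and uniform over the continuous family $\theta$ is the main difficulty. It requires controlling the tail of the trace formula, certifying the length spectrum, and handling the accumulation at $\theta\to0$, where the fiber-class harmonic form must be shown to be the only small-eigenvalue contribution and to be excluded by the involution.
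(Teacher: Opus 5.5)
Your overall architecture matches the paper's. The paper uses the dihedral symmetry of the cyclic covers and splits coexact forms on $M_{\varphi,n}$ into twisted coexact forms on $M_\varphi$ indexed by $n$-th roots of unity. It identifies the limit with the infimum of the twisted coexact gap over characters, and bounds that infimum with a twisted Selberg trace formula on the single manifold $M_\varphi$, uniformly in the character.

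The gap is in how you handle characters near the trivial one. Anti-invariance of $dt$ only disposes of the harmonic form at $\theta=0$. For $\theta=2\pi/n\neq0$ the involution swaps the $\theta$- and $(-\theta)$-components, and $\alpha+s(\alpha)$ survives on $Y_n$; your own formula $\lambda_1^*(Y_n)=\min_j\mu(2\pi j/n)$ already says this. So if the zero eigenvalue of the untwisted Laplacian deformed into small \emph{coexact} eigenvalues, you would get $\lambda_1^*(Y_n)\le\mu(2\pi/n)\to0$, and no involution could prevent it. Your ``main obstacle'' is therefore aimed at the wrong mechanism.

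The missing idea is a multiplicity count:
\begin{itemize}
\item Since $b_1(M_\varphi)=1$, continuity of the full spectrum of $\Delta_\psi$ on $\Omega^1(L_\psi)$ leaves exactly one eigenvalue near $0$ for $\psi$ near $1$.
\item That eigenvalue is already used up by the \emph{exact} form $d_\psi f_\psi$, where $f_\psi$ is the bottom eigenfunction of the twisted function Laplacian. Its eigenvalue tends to $0$ as $\psi\to1$; the paper phrases this via the small Cheeger constant of $M_{\varphi,n}$.
\item Hence the coexact spectrum stays away from $0$ near $\psi=1$ and is continuous through $\psi=1$, so $\delta(\varphi)$ is a positive minimum over all of $U(1)$.
\item Then $\lambda_1^*(M_{\varphi,n})\le\lambda_1^*(Y_{\varphi,n})\le\mu(\zeta)$ for $\zeta\neq\pm1$ gives $\lim_n\lambda_1^*(Y_{\varphi,n})=\delta(\varphi)$.
\end{itemize}
A trace-formula certificate valid for literally every $\psi$ would cover the region near $1$ for one example. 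But the exact/coexact splitting is the reason such a certificate can exist there, and you gave no substitute for it.

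Your homological hypothesis is also too weak. Having no root-of-unity eigenvalues of $\varphi_*$ gives $b_1(Y_n)=0$. However, three things need $H^1(M_\varphi;L_\psi)=0$ for every $\psi\neq1$: continuity of $\mu$ on $(0,2\pi)$, positivity, and the twisted trace formula without a harmonic correction term. That vanishing means no eigenvalue of $\varphi_*$ of absolute value $1$, which is the hypothesis of Proposition~\ref{gap}. Suppose some $e^{i\theta_0}$ with $\theta_0\notin2\pi\mathbb{Q}$ were an eigenvalue. Then there would be twisted harmonic $1$-forms at $\theta_0$. Since the exact spectrum at $\theta_0\neq0$ is bounded away from $0$, the nearby small eigenvalues are coexact, so $\mu(\theta)\to0$ as $\theta\to\theta_0$ and $\lambda_1^*(Y_n)\to0$.

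Finally, the stated interval only comes out of a specific example with extra structure. The paper takes $\varphi=\mathtt{bcbeccadcd}$ in genus $2$, whose mapping torus (volume $\approx6.0896$) is amphicheiral. An orientation-reversing isometry acting trivially on $H^1(M_\varphi;\mathbb{R})$ exchanges the $\pm\sqrt{\lambda^*}$-eigenspaces of $\ast d$, so every twisted coexact eigenvalue has even multiplicity. Feeding this into the functions $J_\psi$ with cutoff $R=8$ gives $\sqrt{\delta(\varphi)}\in[0.90535,0.90975]$, hence the interval in the statement. Both bounds come from the twisted trace formula on $M_\varphi$; the upper one uses test functions certifying an eigenvalue of some $\Delta_\psi$ in a short interval.
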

See also Table \ref{moreexamples} at the end of the paper for other intervals, disjoint from this one, each containing a limit point.

\par
For context, in \cite{LL1} the authors adapted the ideas of \cite{BS} to study the coexact $1$-form spectrum via the Selberg trace formula. In particular, for a given hyperbolic rational homology sphere $Y$, one can obtain explicit information about $\lambda_1^*(Y)$ taking as input its volume and the complex length spectrum up to a given cutoff (as computed for example using SnapPy \cite{SnapPy}). Of course, this strategy cannot be applied to a general infinite family of manifolds. The key point behind Theorem \ref{thm1} is relating the spectral theory of the manifolds $Y_n$ simultaneously to that of a single mapping torus $M_\varphi$, which may be probed by a single (possibly very long) finite computation.

\subsubsection*{Arithmeticity} We also positively answer Question 2 in \cite{AAGLZ}, which was motivated by an analogous result regarding the spinor spectrum of hyperbolic surfaces proved in \cite{AG}.
\begin{thm}\label{thm2}
There exists an infinite family of \textbf{arithmetic} hyperbolic rational homology spheres with a uniform lower bound on $\lambda_1^*$.
\end{thm}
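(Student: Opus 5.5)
The approach is to run the general construction described in the introduction, starting from an arithmetic mapping torus. Choose a closed surface $\Sigma$ of genus at least two and a pseudo-Anosov $\varphi$ with three properties:
\begin{itemize}
\item $M_\varphi$ is an arithmetic hyperbolic manifold, i.e.\ commensurable with a Bianchi group or with a group of units of a quaternion algebra over a number field with exactly one complex place;
\item $M_\varphi$ admits an orientation-preserving involution $\tau$ reversing the fibration direction, so that $\tau$ conjugates $\varphi$ to $\varphi^{-1}$;
\item $b_1(M_\varphi)=1$, i.e.\ $\varphi_*$ has no eigenvalue $1$ on $H_1(\Sigma;\mathbb{Q})$.
\end{itemize}
Once $M_\varphi$ is arithmetic, every finite cover and every quotient of it is arithmetic, since these are commensurable. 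So arithmeticity of the whole family $Y_n$ is automatic, and the task reduces to running the construction for such a seed. Concretely, I would search the SnapPy census of arithmetic fibered manifolds (e.g.\ covers of the figure-eight knot complement filled appropriately, or closed arithmetic manifolds from known lists) for a closed fibered example carrying the needed involution. I would then verify arithmeticity via the trace field and invariant quaternion algebra, computed with Snap.

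The family is then $Y_n = M_{\varphi^n}/\iota_n$. Here $M_{\varphi^n}$ is the $n$-fold cyclic cover, on which the lift of $\tau$ together with the deck group generates a dihedral group, and $\iota_n$ is a fixed-point-free involution in that dihedral group. The free involutions are the reflections $\tau\circ t^k$ with no fixed points; I would arrange these by parity of $n$ or by choice of lift.

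For the rational homology, $H_1(Y_n;\mathbb{Q})$ is the $\iota_n$-invariant part of $H_1(M_{\varphi^n};\mathbb{Q})$. The circle class is negated by $\iota_n$, because it reverses the fibration direction. The remaining part is the fixed space of $\varphi^n_*$ on $H_1(\Sigma;\mathbb{Q})$. So I would require that $\varphi_*$ have no root-of-unity eigenvalues; this can be checked from the characteristic polynomial of $\varphi_*$, and then $b_1(Y_n)=0$ for all $n$.

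For the uniform spectral gap, I would use the general mechanism established for the construction. A coexact $1$-eigenform on $Y_n$ pulls back to an $\iota_n$-anti-invariant or invariant coexact form on $M_{\varphi^n}$. Decomposing by characters of $\mathbb{Z}/n$, eigenforms of $M_{\varphi^n}$ correspond to twisted eigenforms on $M_\varphi$ with unitary flat twisting by $e^{2\pi i k/n}$. It therefore suffices to bound below, uniformly in $\theta\in S^1$, the first eigenvalue of the twisted Hodge Laplacian on $M_\varphi$ restricted to the relevant piece. The danger is $\theta\to 1$, where the untwisted harmonic form $dt$ appears. That form is killed by the involution, because $\iota_n$ flips its sign and we retain only the invariant part. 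Continuity in $\theta$ then gives a positive infimum over the compact circle, provided no small eigenvalue branches arise near $\theta=1$ in the invariant sector. This follows from $b_1=1$ together with perturbation theory: the only near-zero eigenvalue branch near $\theta=1$ is the one emanating from $dt$, which lies in the anti-invariant sector.

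The main obstacle I expect is finding the arithmetic seed: a closed arithmetic fibered manifold with an explicit fibration-reversing involution and no cyclotomic factors in the monodromy's action on homology. I would try first the double branched covers or Dehn fillings of arithmetic two-bridge links (e.g.\ fillings of the Whitehead link or figure-eight sister) that have a strong inversion symmetry, and verify each condition computationally.
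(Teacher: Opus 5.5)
You follow the paper's overall architecture: dihedral symmetry of the cyclic covers, splitting by characters of $\mathbb{Z}/n$ into twisted Laplacians $\Delta_\theta$ on $M_\varphi$, continuity over $U(1)$, and arithmeticity inherited by commensurability. The spectral step, however, has a genuine gap. Your claim that continuity in $\theta$ yields a positive infimum over the circle presupposes that $0$ is not in the coexact spectrum of $\Delta_\theta$ for any $\theta\neq1$, i.e.\ that $H^1(M_\varphi;L_\theta)=0$. For $\theta\neq1$ the Wang sequence identifies this group with $\ker(\theta\varphi^*-1)$ on $H^1(\Sigma;\mathbb{C})$. So it vanishes for all $\theta\neq1$ exactly when $\varphi_*$ has no eigenvalue of absolute value $1$. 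That is the hypothesis of Proposition~\ref{gap}, which the paper checks for its explicit $\varphi$.

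You only exclude roots of unity, which gives $b_1(Y_n)=0$ but not the gap. Suppose $\varphi_*$ has an eigenvalue on the unit circle that is not a root of unity (a Salem-type characteristic polynomial, which your hypotheses allow). Then there are twisted harmonic $1$-forms at some $\theta_0\neq\pm1$. Near $\theta_0$ the exact spectrum, which is the twisted function spectrum, stays away from $0$, so the nearby small eigenvalues are coexact. For each large $n$ some $n$-th root of unity $\zeta_n\neq\pm1$ lies near $\theta_0$. Those eigenforms then descend to $Y_n$ by the $\alpha+s(\alpha)$ argument below, forcing $\lambda_1^*(Y_n)\to0$ even though every $Y_n$ is a rational homology sphere.

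Your treatment of $\theta\to1$ is also wrong as stated. For $\zeta\neq\pm1$ the involution does not act on the $\zeta$-summand: since $srs=r^{-1}$, it exchanges the $\zeta$- and $\zeta^{-1}$-summands. So there is no anti-invariant sector in which to discard the branch emanating from $dt$. Any coexact eigenform $\alpha$ in such a summand yields the nonzero invariant eigenform $\alpha+s(\alpha)$ on $Y_n$. The correct reason this branch is harmless is that for $\theta\neq1$ it is \emph{exact}. It is $d$ of the lowest twisted eigenfunction, reflecting the small Cheeger constant of long cyclic covers, so it never enters the coexact spectrum. The involution is needed only to kill $[dt]$ in the trivial summand.

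Finally, once this mechanism is in place, the theorem amounts to exhibiting one seed, and you leave that as a search. The paper builds the symmetry in first, using reverse palindromic words in the genus-two chain twists with $I$ the antipodal map, and only then tests arithmeticity. Its seed $\varphi=\mathtt{CeBCdcbCDaC}$ is pseudo-Anosov with no eigenvalue of absolute value one, and $M_\varphi\cong\mathtt{s861(3,1)}$ is arithmetic according to Snap.

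Your caution about which reflection is free is well founded. Write this odd-length word as $\varphi=w\,\mathtt{c}\,Iw^{-1}I$. For odd $n$, $\iota$ acts on the fibre $t=\tfrac12$ of $M_{\varphi,n}$ by a conjugate of $\mathtt{c}I$, which fixes $c$ pointwise. So only even $n$ actually give manifolds.
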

The family from Theorem \ref{thm2} consists of arithmetic three-manifolds all commensurable to each other, and almost all of them are not congruence. These are necessary conditions for a family of arithmetic rational homology spheres to admit a uniform gap for $\lambda_1^*.$  Indeed, an infinite sequence of arithmetic three-manifolds Benjamini-Schramm converges to $\mathbb{H}^3$ provided that either they are all congruence or they each belong to a different commensurability class \cite{FS}. Note that the construction in \cite{AG} is also based on the interplay between spectral theory and abelian covers\footnote{This is also referred to as \textit{Bloch wave theory} \cite{DDJ} in physics.} that we will exploit.

\subsubsection*{Relation to Floer theory} The authors' original interest in explicitly understanding the coexact $1$-form spectral gap of hyperbolic rational homology spheres in \cite{LL1} arose from its relation with Floer theoretic invariants of three-manifolds. In particular, it is shown that if $\lambda_1^*(Y)>2$, then $Y$ is an $L$-space i.e. it has trivial reduced monopole Floer homology \cite{KM}. In fact, the construction we present in this paper is the higher genus analogue of the one studied in detail in \cite{LinSolv} for genus one. In the latter paper, the relevant Thurston geometry is $\mathrm{Solv}$ \cite{Mar}, which can be analyzed by hand using ideas from Fourier analysis on solvable groups, providing a geometric proof that all $\mathrm{Solv}$ rational homology spheres are $L$-spaces (first proved in \cite{BGW} via topological methods). Having the relation to Floer theory in mind, and considering that such a simple construction already provides families with significant uniform gap, we conjecture: 
\begin{conj*}
There exists infinitely many hyperbolic rational homology spheres with $\lambda_1^*>2$.
\end{conj*}
It is not clear whether one can hope for a positive solution to the conjecture by a sequence in a fixed commesurability class. While there are several ways to generalize the construction we present - in particular, the starting point does not need to be a fibered manifold - explicitly understanding $\lambda_1^*$ for families of manifolds not commensurate to each other lies beyond the reach of our strategy.

\subsubsection*{Integral homology spheres}
In \cite{AAGLZ}, the authors construct infinite families of hyperbolic \textit{integral} homology spheres with uniform lower bound on the coexact $1$-form spectral gap. Our construction is based on quotients by fixed-point-free involutions and therefore never leads to integral homology spheres: for any double cover between connected manifolds $\tilde{Y}\rightarrow Y$ the portion of the Gysin sequence
\begin{equation*}
0\rightarrow H^0(Y;\mathbb{Z}/2\mathbb{Z})\rightarrow H^0(\tilde{Y};\mathbb{Z}/2\mathbb{Z})   \rightarrow H^0(Y;\mathbb{Z}/2\mathbb{Z})   \rightarrow H^1(Y;\mathbb{Z}/2\mathbb{Z}),
\end{equation*}
implies that $H^1(Y;\mathbb{Z}/2\mathbb{Z})$, and therefore $H_1(Y;\mathbb{Z})$, is non-zero.
\par
It would be very interesting to precisely locate an explicit limit point for $\lambda_1^\ast$ in an infinite family of integer homology spheres such as the families from \cite{AAGLZ}. A well-known conjecture of Ozsv\'ath and Szab\'o asserts that hyperbolic integral homology spheres are never $L$-spaces.  Their conjecture, thus, predicts that $\lambda_1^*\leq 2$ throughout such a family.

\subsubsection*{Acknowledgements} The first author was partially supported by the grants NSF DMS-2503714 and Simons Foundation TSM-00013131; part of the work was carried out at the University of Queensland as a Raybould Visiting Fellow, and he thanks them for their warm hospitality. The second author was partially supported by NSF CAREER grant 2338933.

\section{Mapping tori and special involutions}

Let $\Sigma$ be a closed orientable surface of genus at least two, and consider an orientation-preserving self-diffeomorphism $\varphi$. We will consider the mapping torus
\begin{equation*}
M_\varphi=\bigslant{\mathbb{R}\times\Sigma}{(t,x)\sim(t+1,\varphi(x))}.
\end{equation*}
This manifold fibers over the circle by sending $(t,x)$ to $t$ and therefore always has non-trivial $b_1$ arising from the dual of a fiber. Concretely, this is the de Rham cohomology class of $dt$, and we will introduce an involution to kill such class as follows.
\par
Fix an involution $I$ of $\Sigma$ (so that $I=I^{-1}$) which is fixed-point-free, \textit{orientation-reversing} and for which the condition
\begin{equation}\label{conj}
I\circ\varphi\circ I^{-1}=\varphi^{-1}
\end{equation}
holds. Then the self-map of the mapping-torus $M_\varphi$ given by
\begin{equation*}
\iota(t,x)=(-t,I(x))
\end{equation*}
defines an \textit{orientation-preserving}, fixed-point-free involution of $M_{\varphi}$ and the class $[dt]$ does not descend to the quotient manifold $M_{\varphi}/\iota$ because $\iota^\ast [dt] = -[dt]$. In particular, provided $b_1(M_{\varphi})=1$, $M_{\varphi}/\iota$ is a rational homology sphere. We will refer to $\iota$ as a \textit{special involution} of the mapping torus.
\par
By a celebrated theorem of Thurston's (see for example \cite{Mar}), the mapping class of $\varphi$ is pseudo-Anosov if and only if $M_\varphi$ admits a hyperbolic metric. Furthermore, by Geometrization, $M_\varphi/\iota$ also admits a hyperbolic metric, and via pull-back we see then that $M_\varphi$ admits a hyperbolic metric for which $\iota$ is a fixed-point-free isometry (see again \cite{Mar}). 
\\
\par
Referring to Figure \ref{genus2}, we now introduce a construction of diffeomorphisms satisfying \ref{conj} in the case of surfaces of genus two; for notational convenience, we will also use the letters $g_1,\dots, g_5$ instead of $a,b,c,d,e$. For a concrete instance of the fixed-point-free orientation reversing involution, we will consider the one induced (thinking of the surface embedded in $\mathbb{R}^3$ as the boundary of a standardly embedded genus $2$ handlebody) by
\begin{equation*}
I(x,y,z)=(-x,-y,-z).
\end{equation*}
This maps sends the curve $g_i$ to $g_{5-i}$. We will use the typewriter font to denote the corresponding positive Dehn twist (so that $\mathtt{a}$ is the Dehn twist along $a$), and capital letters to denote inverses (i.e. the negative Dehn twist). In fact, we can make the various choices in defining the Dehn twists $\mathtt{g_i}$ as actual maps (rather than mapping classes) in an $I$-equivariant way so that
\begin{equation*}
I\circ \mathtt{g}_i\circ I^{-1}=\mathtt{G}_{5-i}. 
\end{equation*}
This identity follows as in the standard action of conjugation on Dehn twists \cite{FM}, with the inverse appearing on the right hand side because the map $I$ is orientation reversing.

\begin{figure}
  \centering
\def\svgwidth{0.8\textwidth}
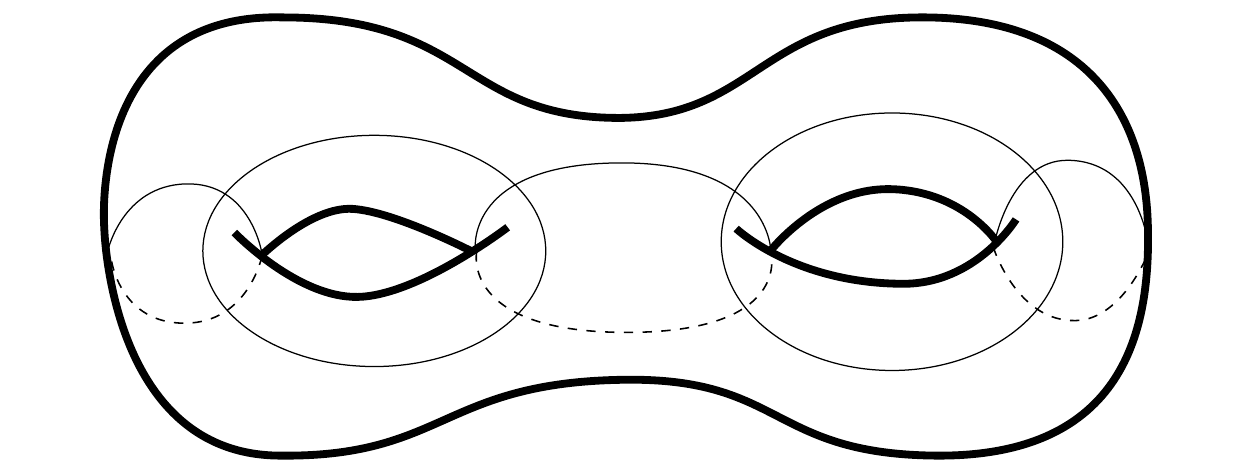
\caption{A standard set of curves on a genus $2$ surface such that the corresponding Dehn twists generate the mapping class group \cite{FM}.}
\label{genus2}
\end{figure} 

In particular, if the diffeomorphism $\varphi=\mathtt{l}_{i_1}\cdots \mathtt{l}_{i_k}$ corresponds to a word in the letters $\mathtt{g}_i$ and $\mathtt{G}_i$, we have
\begin{equation*}
I\circ\varphi\circ I^{-1}=\mathtt{L}_{5-i_1}\cdots \mathtt{L}_{5-i_k},
\end{equation*}
so that taking the inverse we have
\begin{equation*}
\left(I\circ\varphi \circ I^{-1}\right)^{-1}=\mathtt{l}_{5-i_k}\cdots \mathtt{l}_{5-i_1}.
\end{equation*}
This leads to the following definition.
\begin{defn}
We say that a word $\mathtt{l}_{i_1}\cdots \mathtt{l}_{i_k}$ in the $\mathtt{g}_i$ and $\mathtt{G}_i$ is \textit{reverse palindromic} if
\begin{equation*}
\mathtt{l}_{i_j}=\mathtt{l}_{5-i_{(k+1-j)}} \text{ for all }j=1,\dots, k.
\end{equation*}
\end{defn}
For example, the word $\mathtt{aBcDe}$ is reverse palindromic, while the words $\mathtt{abcDe}$ and $\mathtt{aBcAe}$ are not. Our discussion readily implies the following.
\begin{lemma}
If the word $\mathtt{l}_{i_1}\cdots \mathtt{l}_{i_k}$ in the $\mathtt{g}_i$ and $\mathtt{G}_i$ is reverse palindromic, then the corresponding diffeomorphism $\varphi$ satisfies (\ref{conj}).
\end{lemma}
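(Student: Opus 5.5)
The plan is a direct computation, combining the two displayed identities preceding the definition with the reverse palindromic condition.

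First, I will use the $I$-equivariant choice of the Dehn twists, $I\circ\mathtt{g}_i\circ I^{-1}=\mathtt{G}_{5-i}$. Inverting both sides gives $I\circ\mathtt{G}_i\circ I^{-1}=\mathtt{g}_{5-i}$, since $I=I^{-1}$. So for every letter $\mathtt{l}_i$ (a twist or an inverse twist) we have $I\circ\mathtt{l}_i\circ I^{-1}=\mathtt{L}_{5-i}$, where $\mathtt{L}$ denotes the letter of opposite sign on the curve $g_{5-i}$.

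Next, conjugation is a group homomorphism, so I can insert $I^{-1}\circ I$ between consecutive letters. This gives
\begin{equation*}
I\circ\varphi\circ I^{-1}=\mathtt{L}_{5-i_1}\cdots\mathtt{L}_{5-i_k}.
\end{equation*}
Inverting reverses the order and flips each sign:
\begin{equation*}
\left(I\circ\varphi\circ I^{-1}\right)^{-1}=\mathtt{l}_{5-i_k}\cdots\mathtt{l}_{5-i_1}.
\end{equation*}
Here the $j$-th letter $\mathtt{l}_{5-i_{(k+1-j)}}$ is a twist on $g_{5-i_{k+1-j}}$ with the same sign as $\mathtt{l}_{i_{k+1-j}}$.

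Then the reverse palindromic hypothesis says exactly that this $j$-th letter equals $\mathtt{l}_{i_j}$ for all $j$, both as a curve and as a sign. Hence the right-hand side is the word $\mathtt{l}_{i_1}\cdots\mathtt{l}_{i_k}$, which is $\varphi$. This gives $(I\circ\varphi\circ I^{-1})^{-1}=\varphi$, which is equivalent to (\ref{conj}).

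The only delicate point is that the identity must hold for actual diffeomorphisms, not just mapping classes, because $\iota$ has to be a genuine involution of $M_\varphi$. This is exactly why the twists are chosen $I$-equivariantly: since $I$ is fixed-point-free and preserves the union of the curves, swapping $g_i$ and $g_{5-i}$, one takes $I$-invariant annular neighbourhoods and defines the twist on $g_{5-i}$ as the $I$-conjugate of the inverse twist on $g_i$. For $g_3$, which is $I$-invariant, one checks that the standard twist on an $I$-invariant annulus, with a twist profile compatible with the orientation-reversing action, satisfies $I\mathtt{c}I^{-1}=\mathtt{C}$ on the nose. I take this as given from the preceding discussion. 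The rest is bookkeeping of indices and signs.
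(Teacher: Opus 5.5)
Your proposal is correct and follows the paper's own argument: the paper obtains the lemma directly from the $I$-equivariant choice $I\circ\mathtt{g}_i\circ I^{-1}=\mathtt{G}_{5-i}$, then conjugates letter by letter to get $I\circ\varphi\circ I^{-1}=\mathtt{L}_{5-i_1}\cdots\mathtt{L}_{5-i_k}$, inverts, and matches the result against the reverse palindromic condition. Your extra justification that the twists can be realized equivariantly as actual maps fills in a detail the paper only asserts; this includes the symmetric twist profile on the $I$-invariant annulus around $c$, where $I$ acts by a half-rotation that swaps the two sides.
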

It is of course very simple to randomly generate reverse palindromic words (note that if the length of the word is odd, then the middle letter is either $\mathtt{c}$ or $\mathtt{C}$). At a practical level, for any word in the generators we can then use Twister \cite{Twister} to produce a triangulation of the mapping torus $M_\varphi$, and then use SnapPy \cite{SnapPy} to verify hyperbolicity and perform concrete computations; this provides many examples of hyperbolic mapping tori $M_\varphi$ with special involution $\iota$.
\par
Finally, while we have discussed the case of genus two in detail, the construction readily generalizes to higher genus surfaces for example by adding curves to the set of Lickorish generators (see for example \cite[Section 4.4]{FM}) to make the collection $I$-invariant, see Figure \ref{genus3} for the case of genus three.

\begin{figure}
  \centering
\def\svgwidth{0.8\textwidth}
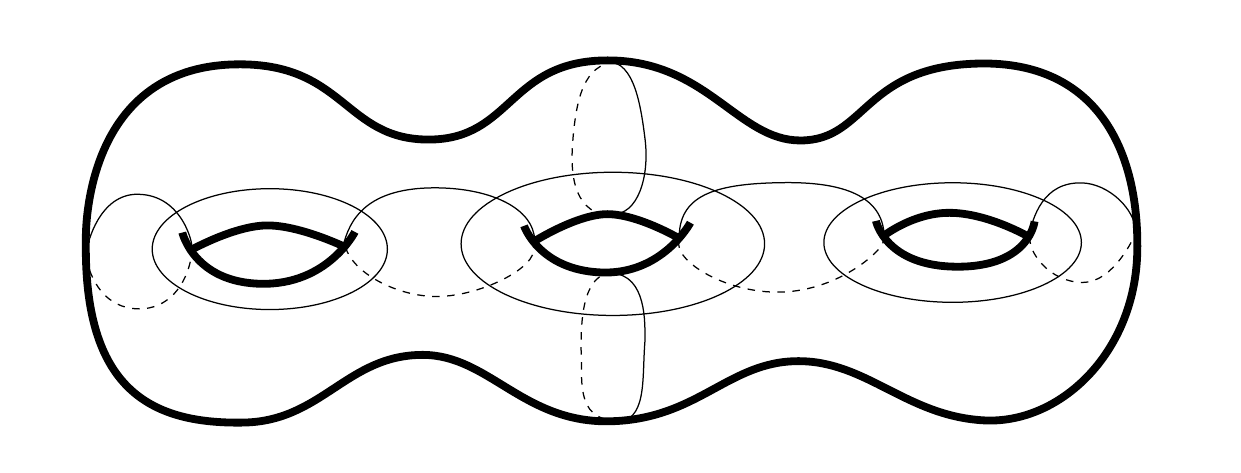
\caption{An $I$-invariant collection of curves on a genus three surface; removing $x$, we obtain (after isotopy) the Lickorish generators.}
\label{genus3}
\end{figure}

\section{Uniform spectral gaps}
If $\varphi$ is the diffeomophism corresponding to a reverse palindromic word, then all the powers $\varphi^n$ also satisfy the condition (\ref{conj}). In particular, the mapping tori of these powers $M_{\varphi,n}$ all admit special involutions $\iota$. We will consider for $n\geq1$ the family of manifolds
\begin{equation*}
Y_{\varphi,n}=M_{\varphi,n}/\iota.
\end{equation*}
A first observation is that if no eigenvalue of $\varphi_*$ (the action induced on $H_1(\Sigma)$) is a root of unity, $Y_{\varphi,n}$ is a rational homology sphere for all $n\geq1$. This readily follows because the short exact sequence
\begin{equation*}
H_1(\Sigma)\stackrel{\mathrm{id}-\varphi^n_*}{\longrightarrow}H_1(\Sigma)\rightarrow H_1(M_{\varphi,n})\rightarrow \mathbb{Z}
\end{equation*}
for the mapping torus of $\varphi^n$ (see for example \cite{Hat}) implies that all $M_{\varphi,n}$ have $b_1=1$.
\\
\par
In order to obtain a family with $\lambda_1^*$ uniformly bounded below, we need the following stronger condition (similar versions of which were previously considered in the literature, cf. \cite{Lot}).
\begin{prop}\label{gap}
Suppose that no eigenvalue of $\varphi_*$ has absolute value $1$. Then $\{\lambda_1^*(Y_{\varphi,n})\}$ is uniformly bounded below.
\end{prop}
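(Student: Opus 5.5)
Since $\lambda_1^*$ of a finite cover is at most that of the base, I will pass to the double covers $M_{\varphi,n}$ and work there. Coexact $1$-forms on $Y_{\varphi,n}$ pull back to $\iota$-invariant coexact $1$-forms on $M_{\varphi,n}$, so it suffices to bound below the coexact spectrum of $M_{\varphi,n}$ on the complement of the (one-dimensional) space of harmonic forms. The key observation is that $M_{\varphi,n}$ is the $n$-fold cyclic cover of $M_\varphi$, with deck group $\mathbb{Z}/n$ generated by $t\mapsto t+1$. Accordingly, $1$-forms on $M_{\varphi,n}$ decompose into isotypic pieces indexed by $n$-th roots of unity $\zeta$, and each piece is naturally identified with $1$-forms on $M_\varphi$ twisted by the flat unitary line bundle $L_\zeta$ with holonomy $\zeta$ around the circle direction. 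The Hodge Laplacian commutes with the deck action, so
\begin{equation*}
\lambda_1^*(M_{\varphi,n})\geq \min_{\zeta^n=1}\lambda_1^*(M_\varphi;L_\zeta),
\end{equation*}
where for $\zeta=1$ the twisted coexact gap is the usual one for $M_\varphi$ (with the harmonic class $[dt]$ removed). The right-hand side is bounded below by $\inf_{\zeta\in S^1}\lambda_1^*(M_\varphi;L_\zeta)$, and the task is to show that this infimum is positive.

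Continuity in $\zeta$ and compactness of $S^1$ reduce the task to showing that, for each $\zeta\in S^1$, there is no twisted coexact eigenvalue at $0$, apart from the contribution of $[dt]$ at $\zeta=1$. For the continuity, I will use that $L_\zeta$ varies analytically with $\zeta$, and that the twisted Hodge Laplacians form a continuous family of elliptic operators on a fixed compact manifold. Their eigenvalues therefore vary continuously, and a positive lower bound at each point gives a uniform one.

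A twisted coexact eigenvalue at $0$ corresponds to a twisted harmonic $1$-form, that is, a class in $H^1(M_\varphi;L_\zeta)$. To compute this cohomology, I will use the Wang sequence for the fibration over the circle with local coefficients:
\begin{equation*}
H^0(\Sigma)\xrightarrow{\ \zeta\varphi^*-\mathrm{id}\ }H^0(\Sigma)\to H^1(M_\varphi;L_\zeta)\to H^1(\Sigma;\mathbb{C})\xrightarrow{\ \zeta\varphi^*-\mathrm{id}\ }H^1(\Sigma;\mathbb{C}).
\end{equation*}
The sign and inverse conventions may differ, but they do not affect the argument. The degree-$0$ part contributes only when $\zeta=1$, and that contribution is exactly $[dt]$. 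The degree-$1$ part is the kernel of $\zeta\varphi^*-1$, which is nonzero only if $\zeta^{-1}$ is an eigenvalue of $\varphi^*$. Such an eigenvalue would have absolute value $1$, which the hypothesis excludes. Hence $H^1(M_\varphi;L_\zeta)$ is as claimed for every $\zeta\in S^1$. By twisted Hodge theory, which holds because $L_\zeta$ is flat and unitary, the gaps are therefore positive for every $\zeta$.

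The main obstacle is carrying out the continuity and compactness step carefully near $\zeta=1$. There the eigenvalue coming from $[dt]$ sits at $0$ and moves away from $0$ as $\zeta$ varies. I also need to separate exact forms from coexact ones uniformly in $\zeta$. My plan is to work with the twisted operator $d_\zeta^*d_\zeta$ restricted to $\ker d_\zeta^*$. Its $0$-eigenspace is exactly the space of twisted harmonic forms, which is trivial for $\zeta\neq1$ and one-dimensional, spanned by $dt$, for $\zeta=1$. Near $\zeta=1$, I will apply analytic perturbation theory to the isolated eigenvalue branch through $dt$. I will check, for instance by conjugating by $e^{i\theta t}$, that this branch corresponds to an exact form, so it does not enter the coexact spectrum. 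The remaining coexact eigenvalues are then bounded away from $0$ uniformly in $\zeta$.
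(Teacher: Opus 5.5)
Your proposal is correct and follows essentially the same route as the paper: pass to the double cover $M_{\varphi,n}$, split coexact forms into $L_\zeta$-twisted pieces on $M_\varphi$, use the hypothesis to kill $H^1(M_\varphi;L_\zeta)$ for $\zeta\neq1$, and conclude by continuity and compactness once the small eigenvalue branch near $\zeta=1$ is shown to be exact.

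The only difference is that last step. The paper uses the small Cheeger constant of large cyclic covers, while you use perturbation theory and the gauge $e^{i\theta t}$: for instance, $d_\theta 1=i\theta\,dt$ produces an exact $1$-form eigenvalue of size $O(\theta^2)$, which accounts for the single eigenvalue of $\Delta_\theta$ near $0$. You should run this argument on the full $\Delta_\zeta$ acting on all $1$-forms, not on $d_\zeta^*d_\zeta$ restricted to $\ker d_\zeta^*$, because that restricted family has a domain that jumps at $\zeta=1$ and so is not an analytic family there.
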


Because of this, we can easily construct families $\{Y_{\varphi,n}\}$ of hyperbolic rational homology spheres with $\lambda_1^*$ uniformly bounded below by a direct search of pseudo-Anosov reverse palindromic words satisfying this (easy to check) homological condition. From this search, we obtain the following.

\begin{proof}[Proof of Theorem \ref{thm2}] Consider the mapping class in genus 2 corresponding to the word
\begin{equation*}
\varphi=\mathtt{CeBCdcbCDaC},
\end{equation*}
which is reverse palindromic, pseudo-Anosov and satisfies the homological condition of Proposition \ref{gap}. The corresponding mapping torus $M_\varphi$ has volume $\approx 4.4153$ and $H_1=\mathbb{Z}+\mathbb{Z}/5$. To verify the arithmeticity of the corresponding family of hyperbolic rational homology spheres, we just need to check the arithmeticity of $M_\varphi$ (see \cite[Ch. 8 and 10]{MR} for definitions and characterization of arithmeticity in terms of properties of the quaternion algebra and integrality of traces). Using SnapPy, one identifies $M_\varphi$ with the manifold in the Hodgson-Weeks census named $\mathtt{s861(3,1)}$. The latter was determined to be arithmetic with the following arithmetic data of the quaternion algebra
\begin{verbatim}
 Field minimum poly (root): x^3 - x^2 + x - 2 (2)
 Real ramification: [1]
 Finite ramification: [[2, [0, 1, 0]~, 1, 1, [1, 1, 1]~]]
 Integer traces/Arithmetic: 1/1
\end{verbatim}
in \cite{Snap} using the computer program Snap.
\end{proof}
We will in fact prove a sharper version of Proposition \ref{gap} which will be needed for the explicit computations behind Theorem \ref{thm1}. Consider a twisting homomorphism
\begin{equation}\label{twisthom}
\psi:\pi_1(M_\varphi)\rightarrow H_1(M_{\varphi})\rightarrow H_1(M_{\varphi})/\mathrm{tors}\equiv\mathbb{Z}\rightarrow U(1),
\end{equation}
to which is associated a topologically trivial hermitian line bundle with flat $U(1)$-connection $L_{\psi}\rightarrow M_\varphi$. Of course, this homomorphism is determined by the value of $1\in\mathbb{Z}$; because of this, we will also write $\psi\in U(1)$.
\par
We can then consider the twisted Hodge Laplacian
\begin{equation*}
\Delta_{\psi}:\Omega^1(L_{\psi})\rightarrow \Omega^1(L_{\psi})
\end{equation*}
on $L_{\psi}$-valued $1$-forms. In this setup, the Hodge theorem provides the identification
\begin{equation*}
\mathrm{ker}\Delta_\psi=H^1(M_\varphi;L_\psi),
\end{equation*}
the first cohomology with twisted coefficients. The assumption that $\varphi_*$ has no eigenvalue with absolute value one implies that the latter group is trivial provided $\psi\neq 1$, see for example \cite[Ch. 5]{DK}. In particular, we have for $\psi\neq 1$ the decomposition in exact and coexact ($L_{\psi}$-valued) $1$-forms
\begin{equation}\label{twistedhodge}
\Omega^1(L_{\psi})=d\Omega^0(L_{\psi})\oplus d^*\Omega^2(L_{\psi}),
\end{equation}
while for $\psi=1$ we have the usual Hodge decomposition of (untwisted, complex-valued) $1$-forms
\begin{equation}\label{hodge}
\Omega^1=d\Omega^0 \oplus\mathbb{C}\oplus d^*\Omega^2
\end{equation}
on $M_\varphi$, the middle summand corresponding to harmonic $1$-forms.
\begin{defn}
We set $\delta(\varphi)\geq0$ to be the infimum, as $\psi\in U(1)$ varies, of the spectral gap of $\Delta_{\psi}$ acting on coexact $1$-forms.\end{defn}
The sharper version of Proposition \ref{gap} is then the following.
\begin{prop}\label{mainprop}
In the setup of Proposition \ref{gap}, the quantity $\delta(\varphi)$ is an actual minimum which is \textbf{strictly positive}, and
\begin{equation*}
\mathrm{lim}_{n\rightarrow\infty} \lambda_1^*(Y_{\varphi,n})=\mathrm{inf}\{\lambda_1^*(Y_{\varphi,n})\}=\delta(\varphi)>0.
\end{equation*}
\end{prop}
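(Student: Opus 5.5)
The plan is to decompose the coexact $1$-form spectrum of $Y_{\varphi,n}$ through the $n$-fold cyclic cover $M_{\varphi,n}\to M_\varphi$ (Bloch wave decomposition), and then study $\psi\mapsto$ (coexact gap of $\Delta_\psi$) as a function on the circle $U(1)$.

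First I relate $Y_{\varphi,n}$ to the twisted Laplacians. Since $M_{\varphi,n}$ is the cyclic cover of $M_\varphi$ associated to $\pi_1(M_\varphi)\to\mathbb{Z}\to\mathbb{Z}/n$, forms on $M_{\varphi,n}$ split under the deck group $\mathbb{Z}/n$ into $\zeta$-isotypic pieces, $\zeta^n=1$. The $\zeta$-piece is identified isometrically with $\Omega^1(L_\zeta)$ on $M_\varphi$, compatibly with $d$, $d^*$ and $\Delta$. Hence
\[
\mathrm{spec}\,\Delta^{1}_{M_{\varphi,n}}=\bigcup_{\zeta^n=1}\mathrm{spec}\,\Delta_\zeta ,
\]
and the same holds for coexact forms: $\zeta=1$ gives the coexact part in (\ref{hodge}), and $\zeta\neq1$ gives the second summand in (\ref{twistedhodge}). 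Coexact forms on $Y_{\varphi,n}$ are exactly the $\iota$-invariant coexact forms on $M_{\varphi,n}$. Since $\iota^*[dt]=-[dt]$, the involution $\iota$ conjugates the deck generator to its inverse, so it maps the $\zeta$-piece to the $\bar\zeta$-piece. For $\zeta\neq\pm1$ this gives an $\iota$-invariant eigenform $\omega+\iota^*\omega$ for every coexact eigenform $\omega$ in the $\zeta$-piece. For $\zeta=\pm1$ I have to check that the spectrum of $\iota$-invariant forms still sees the relevant eigenvalues, or at least that it is bounded below by the full gap. This yields
\[
\min_{\zeta^n=1,\ \zeta\neq\pm1}\mathrm{gap}(\zeta)\ \ge\ \lambda_1^*(Y_{\varphi,n})\ \ge\ \min_{\zeta^n=1}\mathrm{gap}(\zeta)\ \ge\ \delta(\varphi).
\]

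Second, I show that $f(\psi)=$ the coexact spectral gap of $\Delta_\psi$ is continuous on $U(1)$ and strictly positive everywhere. Trivialize $L_\psi$ topologically, writing $\psi=e^{2\pi i s}$, so that $\Delta_\psi$ becomes $\Delta$ conjugated by $d_s=d+2\pi i s\,dt\wedge$. This is an analytic family of elliptic operators in $s$ on a fixed Hilbert space with fixed domain. The projection onto coexact forms is harder to handle directly. I would avoid it by characterizing the gap variationally as
\[
\inf\frac{\|d_s\omega\|^2}{\|\omega\|^2}\quad\text{over } \omega\perp\ker d_s .
\]
Alternatively, note that on $1$-forms the spectrum of $\Delta_\psi$ is the union of the coexact spectrum and the exact spectrum, and the exact spectrum equals the positive spectrum on functions. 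The full spectrum of $\Delta_\psi$ is continuous in $\psi$. The harmonic part vanishes for $\psi\neq1$ by the twisted cohomology vanishing, and for $\psi=1$ it is one-dimensional and accounted for by $[dt]$. Tracking each branch of eigenvalues, including those near $0$ as $\psi\to1$, then shows that $f$ is continuous. The main obstacle is this continuity at $\psi=1$: there the harmonic form $dt$ could a priori deform into small coexact eigenvalues for $\psi$ near $1$. To rule this out, I would use that the perturbed $dt$ becomes exact, namely $d_s(\text{const})=2\pi i s\,dt$. So the small eigenvalue near $0$ lies in the exact part, with eigenvalue $\approx 4\pi^2 s^2\cdot(\text{const})$ coming from functions, and the coexact gap stays near $f(1)$.

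Positivity for $\psi\neq1$ follows from $\ker\Delta_\psi=0$ and discreteness of the spectrum. For $\psi=1$ it is the usual coexact gap, which is positive. By compactness of $U(1)$ and continuity, $\delta(\varphi)=\min f>0$ is attained at some $\psi_0$. Finally, the roots of unity $\zeta\neq\pm1$ of order $n$ become dense in $U(1)$ as $n\to\infty$. Combined with continuity of $f$, the upper bound in the first step gives $\limsup_n\lambda_1^*(Y_{\varphi,n})\le f(\psi_0)=\delta(\varphi)$, while the lower bound gives $\lambda_1^*(Y_{\varphi,n})\ge\delta(\varphi)$ for all $n$. This proves both the limit and the infimum statement.
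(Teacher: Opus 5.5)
Your proposal is correct and follows essentially the same route as the paper. Both arguments use the Bloch decomposition of coexact forms on $M_{\varphi,n}$ into the summands $d^*\Omega^2(L_\zeta)$, continuity of the twisted spectra, compactness of $U(1)$, and the dihedral symmetry producing the nonzero $\iota$-invariant eigenform $\omega+\iota^*\omega$ for $\zeta\neq\pm1$. For the key point that the small eigenvalue near $\psi=1$ is exact rather than coexact, you argue via $d_s(\mathrm{const})$ and the small twisted function eigenvalue, while the paper uses small Cheeger constants of the cyclic covers; these amount to the same thing, and your density-of-roots-of-unity limit step just spells out the paper's concluding sentence.
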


\begin{proof} We study the symmetries of the cyclic covers $M_{\varphi,n}$ of $M_\varphi$. The condition (\ref{conj}) implies that $M_{\varphi,n}$ admits an isometric action of the dihedral group with $2n$ elements
\begin{equation*}
\mathsf{D}_n=\left\{r,s\mid r^n=1, s^2=1, srs=r^{-1}\right\},
\end{equation*}
where $s$ corresponds to the special involution $\iota$, and $r$ corresponds to the covering action by $\mathbb{Z}/n$. Considering the interaction of the latter with the eigenspace decomposition of coclosed $1$-forms, we obtain the identification (preserving the action of the twisted Hodge Laplacians)
\begin{equation}\label{cover}
d^*\Omega^2(M_{\varphi,n})=\bigoplus_{\zeta^n=1}d^*\Omega^2(L_\zeta)
\end{equation}
of coexact $1$-forms on $M_{\varphi,n}$ in terms of twisted coexact $1$-forms on the base manifold $M_\varphi$; let us analyze the latter first.
\par
The spectrum of $\Delta_\psi$ on $\Omega^1(L_\psi)$ varies continuously in the twisting $\psi\in U(1)$ \cite{Kat}. In fact, the direct sum decomposition (\ref{twistedhodge}) implies (for example via their min-max characterization) that the spectra of $\Delta_\psi$ on exact and coexact (twisted) $1$-forms are also both continuous in the twisting $\psi\in U(1)$ when $\psi\neq 1$. The key observation is that the $0$-eigenvalue of the untwisted Laplacian (corresponding to $b_1(M_\varphi) = 1$ as in (\ref{hodge})) will correspond under continuous deformation for twisting $\psi$ near $1\in U(1)$ to a very small eigenvalue on exact, rather than coexact, $1$-forms. This can be seen for example by noticing that the analogous decomposition to (\ref{cover}) holds for exact $1$-forms on $M_{\varphi,n}$, and for $n$ very large the cyclic cover $M_{\varphi,n}$ will have a very small spectral gap on functions (hence on exact $1$-forms) because it will have a very small Cheeger constant \cite{Cha}.
\par
This argument shows that the infimum of the spectral gap of $\Delta_{\psi}$ on coexact $1$-forms for $\psi$ in a small open neighborhood $O$ of $1\in U(1)$ is strictly positive. Finally, the infimum of this spectral gap for $\psi$ in the compact set $U(1)\setminus O$ is also strictly positive by continuity of the spectrum and the fact that $0$ does not belong in the spectrum of $\Delta_\psi$ by (\ref{twistedhodge}). In fact, continuity of the spectrum also implies that the infimum $\delta(\varphi)>0$ over $U(1)$ is realized.
\par
To conclude the proof, we consider the full action of the dihedral group $\mathsf{D}_n$ on $M_{\varphi,n}$. Again, $\mathsf{D}_n$ naturally acts on the eigenspaces; furthermore, the eigenforms that descend to the quotient $Y_{\varphi,n}=M_{\varphi,n}/\iota$ are exactly those fixed by $s$; this readily implies that
\begin{equation*}
\lambda_1^*(Y_{\varphi,n})\geq \lambda_1^*(M_{\varphi,n}) \text{ for all }n.
\end{equation*}
Suppose $\alpha$ is a coexact $1$-form with eigenvalue $\lambda$ on $M_{\varphi,n}$ on which $r$ acts as an $n$th root of unity $\zeta\neq \pm1$. This means that under the decomposition (\ref{cover}), $\alpha$ corresponds to a $\Delta_\zeta$-eigenform in the summand $d^*\Omega^2(L_\zeta)$. Then $s(\alpha)$ is also a coexact $1$-form with eigenvalue $\lambda$, and $r$ acts on it $\zeta^{-1}$. Therefore $s(\alpha)$ is different from $-\alpha$, and $s(\alpha)+\alpha$ is a non-zero $s$-invariant coexact $1$-form with eigenvalue $\lambda$ on $M_{\varphi,n}$, so $\lambda$ belongs to the coexact $1$-form spectrum of $Y_{\varphi,n}$. By the decomposition (\ref{cover}) and the continuity of the spectrum, the result follows.
\end{proof}

\section{Explicit intervals}
By Proposition \ref{mainprop}, we can determine explicit intervals containing $\mathrm{lim}_{n\rightarrow\infty}\lambda_1^*(Y_{\varphi,n})$ for a given $\varphi$ as above (pseudo-Anosov, reverse palindromic, and $\varphi_*$ having no eigenvalue of absolute value $1$) by explicitly studying the quantity $\delta(\varphi)>0$ associated with the mapping torus $M_\varphi$ (which has $b_1=1$). The key tool is the following version of the Selberg trace formula for twisted coexact $1$-forms on $M_\varphi$, which holds for every even (sufficiently regular) compactly supported function $H$ and twisting homomorphism $\psi$ as in (\ref{twisthom}):
\begin{equation}\label{selberg}
\frac{1}{2}\sum \widehat{H}\left(\sqrt{\lambda^*_{\psi,j}}\right)=\frac{\mathrm{vol}(M_\varphi)}{2\pi} \cdot (H(0)-H''(0))+\sum \ell(\gamma_0)\frac{\cos(\mathrm{hol}(\gamma))\cdot\cos(\psi(\gamma))}{|1-e^{\mathbb{C}\ell(\gamma)}||1-e^{-\mathbb{C}\ell(\gamma)}|}H(\ell(\gamma)).
\end{equation}
Referring to our previous works \cite{LL1,LL3} for a more detailed discussion and normalizations, on the left hand side (the spectral side), we sum over the coexact $1$-form spectrum 
\begin{equation*}
0<\lambda_{\psi,1}^*\leq\lambda_{\psi,2}^*\leq\lambda_{\psi,3}^*\leq \cdots
\end{equation*}
of $\Delta_{\psi}$ (appearing with multiplicity), and $\widehat{H}$ is the Fourier transform of $H$, while on the right hand side (the geometric side), the sum runs over the closed geodesics $\gamma$, with $\mathbb{C}\ell(\gamma)$ denoting its complex length and $\gamma_0$ a primitive geodesic $\gamma$ is a multiple of. The proof of this formula is a direct adaptation of the version for untwisted coexact $1$-forms and twisted spinors appearing in \cite{LL1} and \cite{LL3} respectively.
\begin{remark}
A technical point is that for the former formula from \cite{LL1}, the spectral side contains the extra term $\frac{1}{2}(b_1(Y)-1)$, where the first summand corresponds to the zero eigenvalue on coclosed $1$-forms and the second to the contribution of the trivial representation. In our setup, $b_1(Y)=1$, and the contribution of the trivial representation vanishes as soon as we consider a non-trivial twisting.
\end{remark}
Adapting the Fourier optimization ideas of \cite{BS} in our setting as in \cite{LL1}, taking as input the length spectrum of $M_\varphi$ up to a given cutoff $R>0$ (as computed for example by SnapPy via the triangulation provided by Twister) together with homological information about geodesics (computed as in \cite{LL2}) to determine the twisting homomorphism, we obtain an explicit family of functions
\begin{equation*}
J_{\psi}:\mathbb{R}^{\geq0}\rightarrow \mathbb{R}^{\geq0}
\end{equation*}
parametrized by $\psi\in U(1)$ with the following key property:
\begin{equation*}
J_{\psi}(\sqrt{\lambda^*})\geq \mu\quad \text{if $\lambda^*$ is an eigenvalue of $\Delta_\psi$ of multiplicity $\mu$.}
\end{equation*}
In good circumstances, the functions $\{J_{\psi}\}_{\psi \in U(1)}$ therefore allow us to rule out intervals from containing coexact 1-form eigenvalues of $\Delta_\psi$ and provide non-trivial explicit lower bounds on the quantity $\delta(\varphi)$.\footnote{In fact, we first work `formally' in the group ring $\mathbb{C}[H_1(Y;\mathbb{Z})]$ as in \cite{LL4}, which allows to perform computations for varying $\psi$ in an efficient way.} For example, if $J_\psi(t) < 1$ for all $t \in [0,\sqrt{\delta}]$ and all $\psi \in U(1),$ then every $\Delta_\psi$ admits a coexact 1-form spectral gap $\geq \delta.$  Conversely, one can apply the formula (\ref{selberg}) with test functions such as those used in \cite{LL2} to prove (again in good circumstances) that certain intervals actually contain eigenvalues.
\\
\par
The family in Theorem \ref{thm1} is the example from our search within mapping classes of genus 2, 3 and 4 having the largest uniform coexact 1-form spectral gap. It arises from the mapping class in genus 2 corresponding to
\begin{equation*}
\varphi=\mathtt{bcbeccadcd},
\end{equation*}
the mapping torus $M_\varphi$ of which has volume $\approx 6.0896$ {and} $H_1=\mathbb{Z}+\mathbb{Z}/13$. A key feature of $M_\varphi$ that allows for very precise computations is that it is amphicheiral, i.e. it admits an orientation-reversing isometry, as verified using SnapPy. In particular, (after possibly composing with our special involution $\iota$) it admits an orientation-reversing isometry acting as the identity on $H^1(M_\varphi;\mathbb{R})=\mathbb{R}$. As a consequence, for all $\psi\in U(1)$ the coexact $1$-form eigenvalues of $\Delta_\psi$ have \textit{even} multiplicity; this is seen by considering the action of such isometry on the eigenspaces of the operator $\ast d$ (which squares to $\Delta_{\psi}$ on coexact $1$-forms).
\par
Using this information, the approach outlined applied to the length spectrum up to cutoff $R=8$  allows to conclude that
\begin{equation*}
\sqrt{\delta(\varphi)}\in[0.90535, 0.90975],
\end{equation*}
and the conclusion follows (see also Figure \ref{booker} for the plot of $J_\psi$ at a specific parameter).
\\
\par
For a general diffeomorphism $\varphi$ satisfying our assumptions, the plots of the family of functions $\{J_\psi\}_{\psi \in U(1)}$ will not have peaks as nicely localized as the one in Figure \ref{booker}; indeed, in most situations it will not have peaks at all. This is especially true in the situations where $\Delta_\psi$ has distinct but nearby eigenvalues. Heuristically speaking, by Weyl's law one expects the square roots of low-lying eigenvalues $\sqrt{\lambda^*}$ to be spaced roughly proportionally to $\varepsilon := \vol(M_\varphi)^{-1}$ apart from each other. To distinguish them using the trace formula requires using a test function $H$ for which the Fourier transform $\widehat{H}$ is $\varepsilon$-localized.  By the uncertainty principle, this translates to $H$ being at least ${\varepsilon}^{-1}$-diffuse.  Alas, to calculate the geometric side of the trace formula at such a diffuse scale $R={\varepsilon}^{-1}$ requires, by the Prime Geodesic Theorem \cite{Cha}, computing lengths of roughly $e^{2R}/2R$ many prime geodesics, an exponentially difficult task. 
\par
Despite these computational challenges, we managed to identify three other modestly narrow intervals containing limit points for $\lambda_1^\ast$ in families.  The windows containing the limits of $\lambda_1^\ast$ in these three families are mutually disjoint from each other and also from the window in Theorem \ref{thm1}.  See Table \ref{moreexamples} for details, and Figure \ref{genus3} for our conventions for a genus $3$ surface.

\begin{table}[ht]
\centering
\begin{tabular}{|c| c |c |c|c|c|}
\hline
{interval} &genus &  mapping class & $\mathrm{vol}(M_\varphi)$  &  $\mathrm{tors} H_1(M_\varphi)$& amphicheiral  \\ [0.5ex] % inserts table %heading
\hline
$$[0.1705,0.2632]$$&2&$\mathtt{dcbaBBcDDedcb}$&5.9929&$\mathbb{Z}/15$& N\\
\hline
$$[0.4448,0.4803]$$&2&$\mathtt{DaCaEabcdeAeCeB}$&5.6664&$\mathbb{Z}/5$& N\\
\hline
$$[0.6115,0.7090]$$&3&$\mathtt{EEBCFDfGCEAbDBEFCC}$&8.2339&$\mathbb{Z}/2\oplus\mathbb{Z}/10$& Y\\
\hline
\end{tabular}
\caption{More intervals, each containing a limit point of $\lambda_1^*$. Computations were performed with length spectrum cutoff $R=7.5$.}
\label{moreexamples}
\end{table}

\begin{figure}
\includegraphics[width=0.65\linewidth]{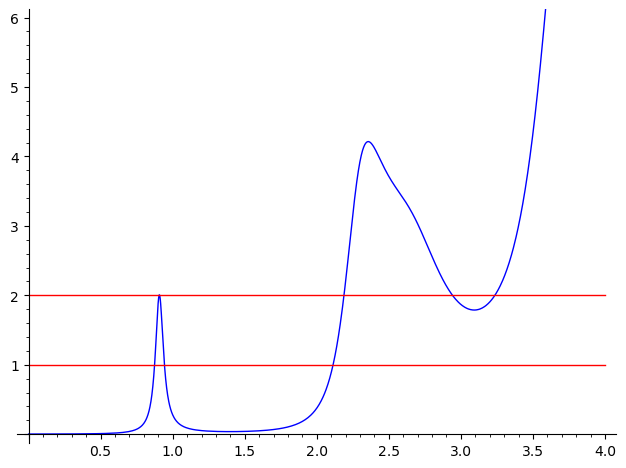}
\caption{The plot $J_{\psi}$ (with abscissa $\sqrt{\lambda^*}$) for a twisting parameter $\psi=e^{2\pi i\cdot 0.3}\in U(1)$, which is close to the one realizing the actual value of $\delta(\varphi)$. The very localized peak corresponds to (the square root) of an eigenvalue of the twisted Hodge Laplacian $\Delta_\psi$ of multiplicity two.}
\label{booker}
\end{figure}

\bibliography{biblio.bib}
\bibliographystyle{alpha}

\end{document}

%% file: genus2.pdf_tex
%% Creator: Inkscape 1.2.2 (b0a8486, 2022-12-01), www.inkscape.org
%% PDF/EPS/PS + LaTeX output extension by Johan Engelen, 2010
%% Accompanies image file 'genus2.pdf' (pdf, eps, ps)
%%
%% To include the image in your LaTeX document, write
%%   \input{<filename>.pdf_tex}
%%  instead of
%%   \includegraphics{<filename>.pdf}
%% To scale the image, write
%%   \def\svgwidth{<desired width>}
%%   \input{<filename>.pdf_tex}
%%  instead of
%%   \includegraphics[width=<desired width>]{<filename>.pdf}
%%
%% Images with a different path to the parent latex file can
%% be accessed with the `import' package (which may need to be
%% installed) using
%%   \usepackage{import}
%% in the preamble, and then including the image with
%%   \import{<path to file>}{<filename>.pdf_tex}
%% Alternatively, one can specify
%%   \graphicspath{{<path to file>/}}
%% 
%% For more information, please see info/svg-inkscape on CTAN:
%%   http://tug.ctan.org/tex-archive/info/svg-inkscape
%%
\begingroup%
  \makeatletter%
  \providecommand\color[2][]{%
    \errmessage{(Inkscape) Color is used for the text in Inkscape, but the package 'color.sty' is not loaded}%
    \renewcommand\color[2][]{}%
  }%
  \providecommand\transparent[1]{%
    \errmessage{(Inkscape) Transparency is used (non-zero) for the text in Inkscape, but the package 'transparent.sty' is not loaded}%
    \renewcommand\transparent[1]{}%
  }%
  \providecommand\rotatebox[2]{#2}%
  \newcommand*\fsize{\dimexpr\f@size pt\relax}%
  \newcommand*\lineheight[1]{\fontsize{\fsize}{#1\fsize}\selectfont}%
  \ifx\svgwidth\undefined%
    \setlength{\unitlength}{595.27559055bp}%
    \ifx\svgscale\undefined%
      \relax%
    \else%
      \setlength{\unitlength}{\unitlength * \real{\svgscale}}%
    \fi%
  \else%
    \setlength{\unitlength}{\svgwidth}%
  \fi%
  \global\let\svgwidth\undefined%
  \global\let\svgscale\undefined%
  \makeatother%
  \begin{picture}(1,0.38095238)%
    \lineheight{1}%
    \setlength\tabcolsep{0pt}%
    \put(0,0){\includegraphics[width=\unitlength,page=1]{genus2.pdf}}%
    \put(0.039598,0.16524075){\color[rgb]{0,0,0}\makebox(0,0)[lt]{\lineheight{1.25}\smash{\begin{tabular}[t]{l}$a$\end{tabular}}}}%
    \put(0.27649714,0.28918776){\color[rgb]{0,0,0}\makebox(0,0)[lt]{\lineheight{1.25}\smash{\begin{tabular}[t]{l}$b$\end{tabular}}}}%
    \put(0.49341202,0.21528862){\color[rgb]{0,0,0}\makebox(0,0)[lt]{\lineheight{1.25}\smash{\begin{tabular}[t]{l}$c$\end{tabular}}}}%
    \put(0.70316845,0.29872214){\color[rgb]{0,0,0}\makebox(0,0)[lt]{\lineheight{1.25}\smash{\begin{tabular}[t]{l}$d$\end{tabular}}}}%
    \put(0.95000305,0.16564184){\color[rgb]{0,0,0}\makebox(0,0)[lt]{\lineheight{1.25}\smash{\begin{tabular}[t]{l}$e$\end{tabular}}}}%
  \end{picture}%
\endgroup%

%% file: genus3.pdf_tex
%% Creator: Inkscape 1.2.2 (b0a8486, 2022-12-01), www.inkscape.org
%% PDF/EPS/PS + LaTeX output extension by Johan Engelen, 2010
%% Accompanies image file 'genus3.pdf' (pdf, eps, ps)
%%
%% To include the image in your LaTeX document, write
%%   \input{<filename>.pdf_tex}
%%  instead of
%%   \includegraphics{<filename>.pdf}
%% To scale the image, write
%%   \def\svgwidth{<desired width>}
%%   \input{<filename>.pdf_tex}
%%  instead of
%%   \includegraphics[width=<desired width>]{<filename>.pdf}
%%
%% Images with a different path to the parent latex file can
%% be accessed with the `import' package (which may need to be
%% installed) using
%%   \usepackage{import}
%% in the preamble, and then including the image with
%%   \import{<path to file>}{<filename>.pdf_tex}
%% Alternatively, one can specify
%%   \graphicspath{{<path to file>/}}
%% 
%% For more information, please see info/svg-inkscape on CTAN:
%%   http://tug.ctan.org/tex-archive/info/svg-inkscape
%%
\begingroup%
  \makeatletter%
  \providecommand\color[2][]{%
    \errmessage{(Inkscape) Color is used for the text in Inkscape, but the package 'color.sty' is not loaded}%
    \renewcommand\color[2][]{}%
  }%
  \providecommand\transparent[1]{%
    \errmessage{(Inkscape) Transparency is used (non-zero) for the text in Inkscape, but the package 'transparent.sty' is not loaded}%
    \renewcommand\transparent[1]{}%
  }%
  \providecommand\rotatebox[2]{#2}%
  \newcommand*\fsize{\dimexpr\f@size pt\relax}%
  \newcommand*\lineheight[1]{\fontsize{\fsize}{#1\fsize}\selectfont}%
  \ifx\svgwidth\undefined%
    \setlength{\unitlength}{595.27559055bp}%
    \ifx\svgscale\undefined%
      \relax%
    \else%
      \setlength{\unitlength}{\unitlength * \real{\svgscale}}%
    \fi%
  \else%
    \setlength{\unitlength}{\svgwidth}%
  \fi%
  \global\let\svgwidth\undefined%
  \global\let\svgscale\undefined%
  \makeatother%
  \begin{picture}(1,0.38095238)%
    \lineheight{1}%
    \setlength\tabcolsep{0pt}%
    \put(0,0){\includegraphics[width=\unitlength,page=1]{genus3.pdf}}%
    \put(0.20314302,0.10076272){\color[rgb]{0,0,0}\makebox(0,0)[lt]{\lineheight{1.25}\smash{\begin{tabular}[t]{l}$b$\end{tabular}}}}%
    \put(0.33874312,0.23917961){\color[rgb]{0,0,0}\makebox(0,0)[lt]{\lineheight{1.25}\smash{\begin{tabular}[t]{l}$c$\end{tabular}}}}%
    \put(0.5525933,0.10737617){\color[rgb]{0,0,0}\makebox(0,0)[lt]{\lineheight{1.25}\smash{\begin{tabular}[t]{l}$d$\end{tabular}}}}%
    \put(0.62795272,0.2406401){\color[rgb]{0,0,0}\makebox(0,0)[lt]{\lineheight{1.25}\smash{\begin{tabular}[t]{l}$e$\end{tabular}}}}%
    \put(0.7758946,0.11095537){\color[rgb]{0,0,0}\makebox(0,0)[lt]{\lineheight{1.25}\smash{\begin{tabular}[t]{l}$f$\end{tabular}}}}%
    \put(0.93542909,0.18167639){\color[rgb]{0,0,0}\makebox(0,0)[lt]{\lineheight{1.25}\smash{\begin{tabular}[t]{l}$g$\end{tabular}}}}%
    \put(0.48095681,0.34639406){\color[rgb]{0,0,0}\makebox(0,0)[lt]{\lineheight{1.25}\smash{\begin{tabular}[t]{l}$x$\end{tabular}}}}%
    \put(0.4820161,0.00951247){\color[rgb]{0,0,0}\makebox(0,0)[lt]{\lineheight{1.25}\smash{\begin{tabular}[t]{l}$y$\end{tabular}}}}%
    \put(0.03495941,0.17506294){\color[rgb]{0,0,0}\makebox(0,0)[lt]{\lineheight{1.25}\smash{\begin{tabular}[t]{l}$a$\end{tabular}}}}%
  \end{picture}%
\endgroup%